\theoremstyle{plain}
\newtheorem{theorem}[equation]{Theorem}
\newtheorem{lemma}[equation]{Lemma}
\newtheorem{corollary}[equation]{Corollary}
\newtheorem*{mainthm}{Theorem~\ref{mainthm}}
\newtheorem*{maincor}{Corollary~\ref{raagcor}}
\theoremstyle{definition}
\newtheorem{definition}[equation]{Definition}
\newtheorem{remark}[equation]{Remark}
\newcommand{\co}{\colon \thinspace}
\newcommand{\R}{{\mathbb R}}
\newcommand{\Z}{{\mathbb Z}}
\newcommand{\Hyp}{{\mathbb H}}
\newcommand{\abs}[1]{\lvert {#1} \rvert}
\renewcommand{\leq}{\leqslant}
\renewcommand{\geq}{\geqslant}
\renewcommand{\epsilon}{\varepsilon}
\renewcommand{\phi}{\varphi}
\newcommand{\N}{{\mathbb N}}
\newcommand{\Rs}{{\mathcal R}}
\newcommand{\Fs}{{\mathcal F}}
\renewcommand{\preceq}{\preccurlyeq}
\DeclareMathOperator{\id}{id}
\DeclareMathOperator{\area}{Area}
\DeclareMathOperator{\FA}{FA}
\DeclareMathOperator{\Sol}{Sol}
\numberwithin{equation}{section}
  \def\tagform@#1{\maketag@@@{%
   \textbf{(\ignorespaces#1\unskip\@@italiccorr)}}}%
   \renewcommand{\eqref}[1]{\textup{\maketag@@@{(\ignorespaces%
        {\ref{#1}}\unskip\@@italiccorr)}}}
\begin{document}

\title[Stallings--Bieri groups]{The Dehn functions of Stallings--Bieri groups}

\author{William Carter}
\address{Mathematics Department\\
        University of Oklahoma\\
        Norman, OK 73019\\
        USA}

\author{Max Forester}
\email{wpcarter@ou.edu\\forester@math.ou.edu}

\begin{abstract} 
We show that the Stallings--Bieri groups, along with certain other
Bestvina--Brady groups, have quadratic Dehn function. 
\end{abstract}

\maketitle

\thispagestyle{empty}

\section{Introduction}

Every simplicial graph $\Gamma$ defines a \emph{right-angled Artin group}
$A_{\Gamma}$, whose generators correspond to vertices of $\Gamma$ and
where two such generators commute if and only if their vertices bound an
edge in $\Gamma$; see Section \ref{sec:prelim} for details.

Given $A_{\Gamma}$, there is a surjective homomorphism $\phi \co
A_{\Gamma} \to \Z$ sending each standard generator to $1$. The
\emph{Bestvina--Brady group} associated to $A_{\Gamma}$, denoted $B\!
B_{\Gamma}$, is defined to be the kernel of $\phi$.

In this paper, we are concerned with the isoperimetric behavior of
certain Bestvina--Brady groups. Dison \cite{Dison} has shown that every
Bestvina--Brady group satisfies a quartic isoperimetric inequality. The
Dehn function of such a group (that is, the \emph{optimal} isoperimetric
function) may be quartic \cite{ABDDY}, or it may be smaller. The examples
of primary interest to us are the \emph{Stallings--Bieri groups}. Some
time ago, Gersten \cite{Gersten} established a quintic isoperimetric
bound for these groups and inquired whether their Dehn functions might be
smaller. Bridson argued in \cite{Bridson:doubles} that these Dehn
functions should be quadratic. However, Bridson and Groves later found an
error and observed that the method only provided a cubic bound
\cite{Bridson,Groves}. In this paper, we give a proof of Bridson's claim.

For $n \geq 1$, the \emph{Stallings--Bieri group} $SB_n$ is defined to be
the Bestvina--Brady group associated with the right-angled Artin group
$F_2 \times \dotsm \times F_2$, where there are $n$ factors $F_2$. That
is, $SB_n$ is equal to $B\!  B_{\Gamma}$, where $\Gamma$ is the join of
$n$ copies of $S^0$ (the graph with two vertices and no edges).  The
groups $SB_n$ are notable for their homological finiteness
properties. Recall that a group $G$ is said to be \emph{of type $\Fs_n$}
if there is a $K(G,1)$ with finite $n$--skeleton. Bieri
\cite{Bieri:notes} has shown that $SB_n$ is of type $\Fs_{n-1}$ but not
of type $\Fs_n$.

Note that $SB_1$ and $SB_2$ are not finitely presented. 
The first of the groups $SB_n$ for which the Dehn function is defined is
$SB_3$, also known as Stallings' group \cite{Stallings}. Dison, Elder,
Riley, and Young \cite{DERY} proved that $SB_3$ has quadratic Dehn
function. Their method makes use of a particular presentation
for $SB_3$ given in \cite{BBMS,Gersten}, and is essentially algebraic in
nature. Their technique does not appear to generalize easily to the other
groups $SB_n$. 

In this paper, we approach the Dehn function of $SB_n$ from a geometric
point of view, by considering these groups as level sets in 
products of CAT(0) spaces and using the ambient CAT(0) geometry. 

The specific setting of our main result is that of cube complexes with
height functions. Our notion of height function is specific to cube
complexes and is slightly more restrictive than the combinatorial Morse
functions of \cite{BeBr}; see Section \ref{sec:prelim}. If $h \co X \to
\R$ is a height function on a cube complex $X$, we denote by $[X]_0$ the
level set $h^{-1}(0)$. Our main theorem is the following: 

\begin{mainthm}
Suppose $\alpha \geq 2$ and let $X_1$, $X_2$, and $X_3$ be simply
connected cube complexes with height functions such 
that each $X_i$ is admissible and has finite-valued Dehn function $\preceq
n^{\alpha}$. Then $[X_1 \times X_2 \times X_3]_0$ is simply connected
and has Dehn function $\preceq n^{\alpha}$. 
\end{mainthm}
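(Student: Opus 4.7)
Given a loop $\gamma$ of length $n$ in $L := [X_1 \times X_2 \times X_3]_0$, the plan is to fill $\gamma$ first in the ambient product $Y := X_1 \times X_2 \times X_3$ and then to push the resulting diagram into $L$ at controlled cost. The product $Y$ is simply connected, and standard facts about products bound its Dehn function by $\max(\delta_{X_1}(n), \delta_{X_2}(n), \delta_{X_3}(n), n^2) \preceq n^{\alpha}$, where the $n^2$ term accounts for commutation squares $e_i \times e_j$ between edges in distinct factors and is absorbed into $n^{\alpha}$ because $\alpha \geq 2$. Thus $\gamma$ bounds a cellular disk $D \to Y$ of area $\preceq n^{\alpha}$. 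Simple connectivity of $L$ will be a byproduct of the pushing construction applied to any loop.

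\textbf{Pushing into the level set.} Each $2$-cell of $D$ is either a square inside a single factor $X_i$ (with the other two coordinates constant) or a commutator square $e_i \times e_j$ in a two-factor subproduct. I would convert each cell into a disk in $L$ by exploiting the fact that $h_1 + h_2 + h_3 \equiv 0$ on $L$: excursions in one factor can be compensated by excursions in another, using admissibility of each $X_i$ to supply controlled paths at height $0$ joining cell endpoints to reference basepoints in the corresponding level sets $[X_i]_0$. This reduces the original filling problem to a collection of simpler filling problems in the two-factor level sets $[X_i \times X_j]_0$ and in the single-factor level sets $[X_i]_0$, each of which is accessible via admissibility and the Dehn function hypothesis on $X_i$.

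\textbf{Main obstacle.} The essential difficulty is arranging the pushing globally so that the total area remains $\preceq n^{\alpha}$. A naive cell-by-cell replacement would pay a cost proportional to the height of the cell, which can be as large as $\Theta(n)$ somewhere on $D$, yielding only an $O(n^{\alpha+1})$ bound. To obtain $n^{\alpha}$ I would organize the diagram by height and push entire horizontal ``slabs'' coherently, using that the total height variation over $D$ is $O(n)$ while each fixed-height slab is itself a diagram of area at most $O(n^{\alpha-1})$. Balancing these two contributions --- and verifying that the sub-loops arising at the slab boundaries have controlled length and bound efficiently in $[X_i \times X_j]_0$ --- is where the bulk of the work lies, and is precisely where the hypothesis $\alpha \geq 2$ is needed so that the quadratic cost of shuffling and pairing height-$\pm 1$ moves between factors is subsumed by the target bound.
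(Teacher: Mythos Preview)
Your approach has a genuine gap at exactly the point you flag as the ``main obstacle.'' The claim that a filling disk $D \to Y$ of area $\preceq n^{\alpha}$ can be organized into horizontal slabs each of area $O(n^{\alpha-1})$ is unjustified and in general false: nothing forces the area of a van Kampen diagram to be spread evenly across heights, and a minimal-area filling in $Y$ may well concentrate almost all of its cells at a single height. Nor is the height range necessarily $O(n)$; the height of an interior vertex is bounded only by its combinatorial distance to $\partial D$, which in a diagram of area $n^{\alpha}$ can be much larger than $n$. Without both of these estimates your balancing argument collapses and you are back to the $O(n^{\alpha+1})$ bound you already identified. This is not a minor technicality: push-down arguments of precisely this flavor are what Bridson originally proposed for $SB_n$ and was later found to yield only a cubic bound (see the Introduction), and they are what underlies Dison's general quartic bound for Bestvina--Brady groups.

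The paper's proof takes a completely different route and never fills in the ambient product at all. It works directly in $L = [X]_0$, using the Embedding Lemma to produce subcomplexes of $L$ combinatorially isomorphic to $X_i \times X_j$ (one for each choice of monotone line in the remaining factor). Given three vertices $a,b,c \in L$ and monotone lines through the appropriate coordinates, one builds a ``spanning triangle'' in $L$ whose seven pieces each lie in such a subcomplex and hence fill with area $\preceq P^{\alpha}$, where $P = d_0(a,b)+d_0(b,c)+d_0(c,a)$. A loop of length $n$ is then filled by a dyadic (Farey-type) template: one central triangle of perimeter $\sim n$, three triangles of perimeter $\sim n/2$, six of perimeter $\sim n/4$, and so on. The total area is bounded by a constant times $\sum_{i} 2^{i}\,(n/2^{i})^{\alpha} = n^{\alpha}\sum_{i} 2^{(1-\alpha)i}$, and it is the convergence of this geometric series---not any area-versus-height tradeoff---that uses $\alpha \geq 2$ and delivers the $n^{\alpha}$ bound.
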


Since right-angled Artin groups act naturally on CAT(0) cube complexes
with height functions, one easily obtains the following corollary. 

\begin{maincor}
Suppose $\Gamma = \Gamma_1 \ast \Gamma_2 \ast \Gamma_3$, so that
$A_{\Gamma}$ is the product $A_{\Gamma_1} \times A_{\Gamma_2} \times
  A_{\Gamma_3}$. Then the Bestvina-Brady group $B\! B_{\Gamma}$ has
  quadratic Dehn function. 
\end{maincor}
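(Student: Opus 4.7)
The plan is to apply the main theorem to three CAT(0) cube complexes coming from the factors of $A_\Gamma$, and then transfer the resulting Dehn function bound to $B\!B_\Gamma$ via a geometric action on the relevant level set.

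For each $i \in \{1,2,3\}$, I would take $X_i$ to be the universal cover of the Salvetti complex of $A_{\Gamma_i}$. Then $X_i$ is a simply connected CAT(0) cube complex on which $A_{\Gamma_i}$ acts properly and cocompactly, and the standard homomorphism $\phi_i \co A_{\Gamma_i} \to \Z$ lifts to an affine height function $h_i \co X_i \to \R$. Admissibility of $h_i$ should follow directly from the definition in Section \ref{sec:prelim}, since this is essentially the motivating example. Because $A_{\Gamma_i}$ acts geometrically on the CAT(0) space $X_i$, the Dehn function of $X_i$ is $\preceq n^2$.

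Applying the main theorem with $\alpha = 2$ then yields that $Y := [X_1 \times X_2 \times X_3]_0$ is simply connected with Dehn function $\preceq n^2$. The next observation is that the product action of $A_\Gamma = A_{\Gamma_1} \times A_{\Gamma_2} \times A_{\Gamma_3}$ on $X_1 \times X_2 \times X_3$ is proper and cocompact, and the sum $h = h_1 + h_2 + h_3$ is equivariant with respect to $\phi \co A_\Gamma \to \Z$. Thus $B\!B_\Gamma = \ker \phi$ preserves $Y$ and acts properly on it; cocompactness of the restricted action follows because $B\!B_\Gamma \backslash Y$ can be identified with the fiber over $[0]$ of the induced map $A_\Gamma \backslash (X_1 \times X_2 \times X_3) \to \R/\Z$ from a compact space, hence is itself compact. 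Transferring the Dehn function bound across this geometric action gives $\preceq n^2$ for $B\!B_\Gamma$.

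For the matching quadratic lower bound, I would pick vertices $v_i \in \Gamma_i$ for each $i$ and observe that $v_1 v_2^{-1}$ and $v_1 v_3^{-1}$ are commuting elements of infinite order in $B\!B_\Gamma$, since $v_i$ commutes with $v_j$ whenever $i \neq j$ by the join structure. These generate a copy of $\Z^2$, so $B\!B_\Gamma$ is not hyperbolic and its Dehn function is $\succeq n^2$ by Gromov's theorem. The main technical steps beyond invoking the main theorem are the explicit verification of admissibility for the $h_i$ and the cocompactness of the $B\!B_\Gamma$-action on $Y$; both should follow cleanly from the RAAG setup but merit careful statement.
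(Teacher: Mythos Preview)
Your argument is correct and matches the paper's proof: verify that each $X_{\Gamma_i}$ is CAT(0), admissible, and has at most quadratic Dehn function, apply Theorem~\ref{mainthm} with $\alpha=2$, and transfer the bound to $B\!B_\Gamma$ via the free cocompact action on the simply connected level set $[X_\Gamma]_0$. The only difference is cosmetic: for the lower bound the paper points directly to the $2$--dimensional quasi-flats $[L_1 \times L_2 \times L_3]_0$ in the level set, whereas you exhibit a $\Z^2$ subgroup and invoke Gromov's gap theorem---but your $\Z^2$ acts cocompactly on exactly such a flat, so the two observations are the same in different dress.
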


In particular, $SB_n$ has quadratic Dehn function for every $n \geq
3$. This yields new information on the class of groups having
quadratic Dehn function: 

\begin{corollary}
For each $n \geq 3$ there exist groups with quadratic Dehn function that
are of type $\Fs_{n-1}$ but not of type $\Fs_n$. 
\end{corollary}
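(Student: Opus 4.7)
The plan is to exhibit, for each $n \geq 3$, a single group that simultaneously achieves the finiteness properties claimed and has quadratic Dehn function, and the natural candidate is the Stallings--Bieri group $SB_n$ itself. The finiteness properties come for free from Bieri's theorem cited in the introduction: $SB_n$ is of type $\Fs_{n-1}$ but not of type $\Fs_n$. So the only work is to verify that $SB_n$ has quadratic Dehn function, and this is where the main corollary of the paper will do the heavy lifting.

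First I would recall that $SB_n = B\!B_{\Gamma}$ where $\Gamma$ is the join of $n$ copies of $S^0$. For $n \geq 3$, I can decompose this join as $\Gamma = \Gamma_1 \ast \Gamma_2 \ast \Gamma_3$, for instance by letting $\Gamma_1$ and $\Gamma_2$ each be a single copy of $S^0$ and letting $\Gamma_3$ be the join of the remaining $n-2$ copies of $S^0$ (if $n = 3$ all three $\Gamma_i$ are just $S^0$). This decomposition makes the right-angled Artin group split as $A_{\Gamma} = A_{\Gamma_1} \times A_{\Gamma_2} \times A_{\Gamma_3}$, so the hypotheses of the main corollary (Corollary~\ref{raagcor}) are satisfied. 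Applying that corollary immediately yields that $B\!B_{\Gamma} = SB_n$ has quadratic Dehn function.

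Combining these two facts, $SB_n$ is a group of type $\Fs_{n-1}$ but not of type $\Fs_n$ whose Dehn function is quadratic, proving the corollary. There is really no substantive obstacle here, since the entire content has been packaged into the main corollary; the role of this corollary is simply to observe that the class of groups with quadratic Dehn function is now known to contain examples at every level of the $\Fs_n$ hierarchy.
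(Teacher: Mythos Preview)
Your proposal is correct and follows exactly the approach implicit in the paper: the corollary is an immediate consequence of combining Bieri's finiteness result for $SB_n$ with the fact (noted right after Corollary~\ref{raagcor}) that $SB_n$ has quadratic Dehn function since $\Gamma$, being a join of $n\geq 3$ copies of $S^0$, splits as a three-fold join. Your explicit grouping of the join factors into $\Gamma_1, \Gamma_2, \Gamma_3$ just spells out what the paper leaves tacit.
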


\subsection*{Methods} 
As mentioned above, our basic viewpoint is the study of level sets in
products of CAT(0) spaces, or of more general spaces. The case of
horospheres in products has been well studied, for instance by 
Gromov \cite{Gromov} and Dru\c{t}u \cite{Drutu}, and our approach here
uses similar ideas. 

If one is looking at a product of CAT(0) spaces with a height function,
it is important to note that the zero-level set is not generally
CAT(0). However, thanks to the product structure, we will be able to find
many overlapping CAT(0) \emph{subspaces} within the zero-level set. An
example of this phenomenon, discussed in \cite[2.B(f)]{Gromov}, is the
solvable Lie group $\Sol_5$ considered as a horosphere in $X = \Hyp^2
\times \Hyp^2 \times \Hyp^2$. The height function in this case is a
Busemann function on $X$. While $\Sol_5$ is certainly not non-positively
curved, it contains many isometrically embedded copies of $\Hyp^2 \times
\Hyp^2$. Indeed, there are three transverse copies passing through every
point. 

In our setting of cube complexes with height functions, the appropriate
subspaces are found using the Embedding Lemma (\ref{embedding}). We
insist on using a particular cell structure (the \emph{sliced cell
  structure}, see Section \ref{sec:embedding}). Then the lemma produces
subcomplexes of the level set that are \emph{combinatorially 
  isomorphic} to the factors of the ambient product space. These
subcomplexes can be found in abundance, if the factor cube complexes are
``admissible'' (see Section \ref{sec:disks}). 

The next basic technique that we use comes from Gromov
\cite[5.A$_3''$]{Gromov}, in which a disk is filled using triangular
regions whose areas are controlled by a geometric series. Doing so
depends on using a particular triangulation of the disk, shown in Figure
\ref{fig:farey}. We note that Young has formalized this idea in his
notion of a \emph{template} \cite{Young}, although here we do not require
the full generality of Young's notion.  

The main body of our argument entails showing how to fill the triangular
regions of the template, using the subcomplexes of the level set
provided by the Embedding Lemma. This is achieved using the scheme shown
in Figure \ref{fig:triangle}. 

\subsection*{Acknowledgments} 
The authors are grateful to Noel Brady for many valuable discussions
related to this work. The second author was partially supported
by NSF grant DMS-1105765.

 \section{Preliminaries}\label{sec:prelim}

\subsection*{Dehn functions}
Let $X$ be a simply connected cell complex. Given a closed edge path $p
\co S^1 \to X$, the \emph{filling area} of $p$, informally, is the
minimal number of $2$--cells of $X$ that must be crossed in a
nullhomotopy of $p$ in the $2$--skeleton of $X$. One way to formalize
this notion is to use \emph{admissible maps} as in \cite{BBFS}. A map $f
\co D^2 \to X$ is \emph{admissible} if its image lies in $X^{(2)}$ and
the preimage of each open $2$--cell is a disjoint union of open disks in
$D^2$, each mapping homeomorphically to its image. The \emph{area} of $f$
is the total number of preimage disks. We define the filling area of $p$
to be 
\[ \FA(p) = \min \{\, \area(f) \mid f \text{ is an admissible map
  extending } p \, \}.\]
The \emph{Dehn function} of $X$ is the function $\delta_X \co \N \to \N
\cup \{\infty\}$ given by 
\[ \delta_X(n) = \sup \{ \, \FA(p) \mid p \text{ is a closed edge path of
  length } \leq n \, \},\]
where \emph{length} is the number of edges traversed by $p$. 

There is an equivalence relation on monotone functions $f \co \N \to
\N$, where we say that $f \simeq g$ if $f \preceq g$ and $g \preceq f$. 
Here, $f \preceq g$ means that there is a constant $C$ such that $f(n)
\leq C g(Cn + C) + Cn + C$ for all $n$. 

If $G$ is a finitely presented group, then the Dehn function of any
Cayley $2$--complex for $G$ takes values in $\N$. Moreover, any two
Cayley $2$--complexes will be quasi-isometric and their Dehn functions
will be equivalent. The equivalence class of this function is, by
definition, the Dehn function of $G$. See \cite{Bridson:word} for more
background on Dehn functions, including various alternative definitions. 

\subsection*{Right-angled Artin groups}
Given a simplicial graph $\Gamma$ with vertex set $V(\Gamma)$ and edge
set $E(\Gamma)$, the \emph{right-angled Artin group}
$A_{\Gamma}$ is the group with generating set  $\{a_v\}_{v \in
  V(\Gamma)}$ and relations $\Rs = \{[a_v, a_u] \mid \{v,u\} \in
E(\Gamma)\}$. 

Following \cite{Davis}, there is a natural model for $K(A_{\Gamma},1)$
which is a subcomplex of a torus. Let $T^{V(\Gamma)}$ denote the
product of copies of the circle, one for each vertex of $\Gamma$. For any
subset $U \subset V(\Gamma)$ let $T^U$ be the sub-torus spanned by the
circle factors corresponding to $U$. Let $K(\Gamma)$ denote the set of
subsets of $V(\Gamma)$ which span complete subgraphs of $\Gamma$. Then we
define 
\[ T_{\Gamma} \ = \bigcup_{U \in K(\Gamma)} T^U.\]
This subcomplex of $T^{V(\Gamma)}$ is aspherical and has fundamental
group $A_{\Gamma}$. It has a piecewise Euclidean cubical structure
satisfying Gromov's link condition. Thus it has non-positive curvature,
and the universal cover is a CAT(0) cube complex. 
We will denote this universal cover by ${X_{\Gamma}}$. 

For each $U \in K(\Gamma)$ the preimage of $T^U$ in
${X_{\Gamma}}$ is a disjoint union of isometrically embedded
copies of $\R^U$, which we will call \emph{$U$--flats}, or
\emph{coordinate flats}. 

\subsection*{Height functions}
Let $X$ be a cube complex. A \emph{height function} on $X$
is a continuous map $h \co X \to \R$ which is affine on each cube and
takes each edge to an interval of the form $[n, n+1]$ with $n \in \Z$. 

If $X_1, \dotsc, X_n$ are cube complexes with height functions
$h_i$ on $X_i$, then 
\begin{equation}\label{productheight}
h(x_1,\dotsc, x_n) \ = \ h_1(x_1) + \dotsm + h_n(x_n)
\end{equation}
defines a height function on $X_1 \times \dotsm \times X_n$. Unless
stated otherwise, a product of cube complexes with height functions will
be given this height function by default. 

If $h \co X \to \R$ is a height function, we denote by $[X]_0$ the level
set $h^{-1}(0)$.  

\medskip

In the case of ${X_{\Gamma}}$, a height function can be defined as
follows. Choose a base vertex in ${X_{\Gamma}}$.  
Consider the linear map $\R^{V(\Gamma)} \to \R$ which takes each standard
basis vector to $1 \in \R$. This map descends to a map $T^{V(\Gamma)} \to
S^1$, which restricts to a map $T_{\Gamma} \to S^1$. This latter map
induces the homomorphism $\phi \co A_{\Gamma} \to \Z$ sending each
generator $a_v$ to $1$. The desired height function 
\[ h_{\Gamma} \co {X_{\Gamma}} \to \R\]
is the unique lift of the map $T_{\Gamma} \to S^1$ which takes the base
vertex of ${X_{\Gamma}}$ to $0$. Moreover, this height function
is $\phi$--equivariant. For more details on $h_{\Gamma}$ and
$\phi$, see \cite[5.12]{BeBr}. 

\begin{remark}
If $\Gamma$ is an $n$--fold join $\Gamma_1 \ast \dotsm \ast \Gamma_n$, then
$A_{\Gamma} = A_{\Gamma_1} \times \dotsm \times A_{\Gamma_n}$ and
${X_{\Gamma}}$ is the product cube complex ${X_{\Gamma_1}} \times \dotsm
\times {X_{\Gamma_n}}$. Choosing basepoints in each $X_{\Gamma_i}$
defines height functions $h_{\Gamma_i}$. Using the product
basepoint, $h_{\Gamma}$ then agrees with the height function
\eqref{productheight} built from the functions $h_{\Gamma_i}$.  
\end{remark}

\section{The embedding lemma}\label{sec:embedding}

\subsection*{The sliced cell structure} 
Let $X$ be a cube complex with a height function $h$. The
\emph{sliced cell structure} on $X$ is obtained by subdividing each cube
of $X$ along the hyperplanes $h^{-1}(n)$ for each $n \in \Z$. Each
$d$--dimensional cube is split into $d$ convex polytopes of
dimension $d$, which are affinely equivalent to \emph{hypersimplices}
(see Remark \ref{tessellation} below). 

There are two types of cells in the sliced cell
structure. \emph{Horizontal cells} are those whose image under $h$ is a
point. The rest are \emph{transverse cells}; each of these is a
piece of a cube of the same dimension, and maps to an interval $[n, n+1]$ 
under $h$. 

Whenever we have a cube complex $X$ with a height function, we will
assume that $X$ has been given the sliced cell structure, unless stated
otherwise. We may refer to it as a \emph{sliced cube complex} to
emphasize this assumption. 

Note that the level set $[X]_0$ is a subcomplex of $X$ with this
structure. 

\begin{lemma}\label{isom}
Let $X$ be a sliced cube complex with height function $h$. Give $\R$ the
structure of a cube complex with vertices at the integers. Then $X \times
\R$ is a cube complex with height function $H(x,t) = h(x) + t$. Define
the function $f \co X \to X \times \R$ by $f(x) = (x, -h(x))$. Then $f$
is a combinatorial isomorphism of\/ $X$ onto the subcomplex $[X \times
\R]_0$. 
\end{lemma}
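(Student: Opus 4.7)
The plan is to verify separately that $f$ is a homeomorphism onto $[X \times \R]_0$ and that it induces a bijection between sliced cells. The first part is immediate: from $H(f(x)) = h(x) + (-h(x)) = 0$ the image lies in the level set, while any $(x,t) \in [X \times \R]_0$ satisfies $t = -h(x)$ and hence equals $f(x)$; continuity of $f$ follows from continuity of $h$, and projection to the first coordinate is a continuous two-sided inverse.

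For the cellular part, I would fix a cube $c$ of $X$ in the underlying cube structure. Its sliced cells are the horizontal slices $\sigma_m = \{x \in c : h(x) = m\}$ and the transverse cells $\tau_m = \{x \in c : h(x) \in [m, m+1]\}$, for $m \in \Z$. The cubes of $X \times \R$ containing pieces of $c$ are the products $c \times [n, n+1]$, and the sliced cells of $X \times \R$ are obtained by further subdividing these along the hyperplanes $H^{-1}(k)$. A sliced cell of $X \times \R$ lies in $[X \times \R]_0$ exactly when $H$ is identically zero on it. Inside $c \times [n, n+1]$, the locus $\{H = 0\}$ is $\{(x, -h(x)) : x \in \tau_{-n-1}\} = f(\tau_{-n-1})$, the graph of the affine function $-h|_{\tau_{-n-1}}$; and the height-zero horizontal cell at the shared face of two adjacent cubes $c \times [n-1, n]$ and $c \times [n, n+1]$ is $f(\sigma_{-n})$. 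Thus $f$ sends sliced cells of $X$ bijectively to sliced cells of $[X \times \R]_0$, and since $h$ is affine on each sliced cell, $f$ restricts to an affine isomorphism on each one.

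The only real difficulty is the bookkeeping in the previous paragraph --- correctly cataloging both the transverse and the horizontal sliced cells of $X \times \R$ lying at height zero and matching them to the sliced cells of $X$ via $f$. Once this is done, the lemma reduces to checking two elementary formulas and there is no deeper content.
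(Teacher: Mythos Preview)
Your proof is correct and follows essentially the same approach as the paper: first establish that $f$ is a homeomorphism onto the level set with inverse given by projection to the first factor, then verify that $f$ carries sliced cells to sliced cells. The paper organizes the cellular step slightly differently---it works in explicit coordinates $[0,1]^d$ on each cube and treats only transverse cells directly, deducing the horizontal case from the fact that an injective affine map preserves face structure---but the content is the same as your bookkeeping with $\tau_m$ and $\sigma_m$.
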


\begin{proof}
The claim that $H$ is a height function follows from
\eqref{productheight}, since the identity is a height function on $\R$.  

For the main conclusion, 
it is clear that $f$ is a homeomorphism from $X$ to $[X \times
\R]_0$, with inverse given by projection onto the first factor. It 
remains to show that each $d$--cell in $X$ maps bijectively to a
$d$--cell in $[X \times \R]_0$. We will show that this holds for each
\emph{transverse} $d$--cell, and moreover that the polyhedral structure
of the cell is preserved. Then, since horizontal cells are faces of
transverse cells, it follows that horizontal cells also map as desired. 

Let $\sigma$ be a transverse $d$--cell contained in a $d$--dimensional
cube $C \subset X$. There is a parametrization of $C$ as $[0,1]^d$ such
that $h\vert_C$ is given by $h(x_1, \dotsc, x_d) = x_1 + \dotsm + x_d +
N$ for some $N \in \Z$. Then $\sigma$ is defined by the inequalities
\begin{equation}\label{sigmaregion}
0 \ \leq \ x_i \ \leq \ 1 \ \ (i =  1, \dotsc, d), \quad k \ \leq \ x_1 +
\dotsm + x_d \ \leq \ k+1
\end{equation}
for some $k \in \{0, \dotsc, d-1\}$. 

The image $f(\sigma)$ lies in the set $C \times \R$ with coordinates
$x_1, \dotsc, x_d, t$. On the cube $C$, $f$ is given by 
\begin{equation}\label{affinemap}
(x_1, \dotsc, x_d) \mapsto (x_1, \dotsc, x_d, -x_1 - \dotsm - x_d -
N).
\end{equation}
Under this map, the region \eqref{sigmaregion} maps onto the region 
\begin{gather*}
0 \ \leq \ x_i \ \leq \ 1 \ \ (i =  1, \dotsc, d), \quad -k-N-1 
\ \leq \ t \ \leq \ -k-N, \\
x_1 + \dotsm + x_d + N + t = 0.
\end{gather*}
But this is simply the $0$--level set of the cube $[0,1]^d \times
[\ell, \ell+1]$, for $\ell = -k-N-1$, with respect to the height function
on $X \times \R$. That is, $f(\sigma)$ is a horizontal $d$--cell of $X
\times \R$ at height $0$. 

Finally, note that the description \eqref{affinemap} of $f$ shows that
$f\vert_C \co C \to C \times \R$ is the restriction of an injective
affine linear map $\R^d \to \R^{d+1}$, and such a map will preserve the
combinatorial structure of any convex polyhedron. 
\end{proof}

\begin{remark}\label{tessellation}
In the case $X = \R^n$ with its standard cubical structure and 
height function $h(x_1, \dotsc, x_n) = \sum_i x_i$, the
image $[\R^n \times \R]_0$ is $\R^n$ tessellated by hypersimplices. See
\cite[Section 3.3]{AR} for a description of this tessellation and its
cells. The map $f \co \R^n \to [\R^n \times \R]_0$ is affine linear. 
\end{remark}

\begin{definition}
Let $X$ be a cube complex with height function $h$. A
\emph{monotone line} in $X$ is a $1$--dimensional subcomplex $L \subset
X$ such that $h \vert_L \co L \to \R$ is a homeomorphism. 
\end{definition}

\begin{lemma}[Embedding Lemma]\label{embedding}
Let $X$ and $Y$ be sliced cube complexes with height functions $h_X,
h_Y$, and 
let $L \subset Y$ be a monotone line. The function \[f_L(x) \ = \ (x,
(h_Y\vert_L)^{-1}(-h_X(x)))\] is a combinatorial embedding of\/ $X$ into
$[X\times Y]_0$, with image $[X\times L]_0$. 
\end{lemma}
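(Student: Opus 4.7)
The plan is to reduce this statement to Lemma \ref{isom}, which already handles the case $Y = \R$. The key idea is that a monotone line $L \subset Y$ is combinatorially indistinguishable from $\R$ (with integer vertices) via the map $h_Y\vert_L$. Once this is established, I obtain the desired embedding as the composition of the map from Lemma \ref{isom} with the isomorphism $X \times \R \to X \times L$ induced by $(h_Y\vert_L)^{-1}$.

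First I would verify that $h_Y\vert_L \co L \to \R$ is a combinatorial isomorphism. For one-dimensional sliced complexes the sliced cell structure coincides with the original cube structure, since the defining property of a height function forces each edge of $Y$ to already map onto an interval of the form $[n,n+1]$. Combined with the fact that $h_Y\vert_L$ is a homeomorphism, this forces each edge of $L$ to map affinely onto such an interval, with vertices going to integer points. Consequently the map $g \co X \times \R \to X \times L$ defined by $g(x,t) = (x, (h_Y\vert_L)^{-1}(t))$ is a combinatorial isomorphism of sliced cube complexes. Moreover $g$ intertwines the product height functions, since $h_X(x) + h_Y((h_Y\vert_L)^{-1}(t)) = h_X(x) + t$, so it restricts to a combinatorial isomorphism $[X\times \R]_0 \to [X \times L]_0$.

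Composing this with the combinatorial isomorphism $f \co X \to [X \times \R]_0$ provided by Lemma \ref{isom}, namely $f(x) = (x, -h_X(x))$, yields a combinatorial embedding $X \to [X \times L]_0 \subset [X \times Y]_0$, and a direct computation shows $g \circ f = f_L$. To identify the image, it is clear from the formula that $f_L(X) \subset [X \times L]_0$; conversely, any $(x,y) \in [X \times L]_0$ satisfies $y \in L$ and $h_Y(y) = -h_X(x)$, which forces $y = (h_Y\vert_L)^{-1}(-h_X(x))$ and hence $(x,y) = f_L(x)$.

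I do not anticipate a serious obstacle: the only delicate point is confirming that the sliced cell structure on $L$ corresponds to the integer structure on $\R$ under $h_Y\vert_L$, and this is essentially forced by the definition of a height function together with the monotonicity of $L$.
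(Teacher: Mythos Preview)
Your proposal is correct and follows essentially the same route as the paper: factor $f_L$ as the map $f$ from Lemma \ref{isom} followed by the height-preserving combinatorial embedding $\id \times (h_Y\vert_L)^{-1} \co X \times \R \to X \times Y$. The paper's proof is terser, omitting the explicit verification that $h_Y\vert_L$ is a combinatorial isomorphism onto $\R$ and the identification of the image, but your additional details are accurate and the argument is the same.
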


In particular, $[X \times L]_0$ is combinatorially isomorphic to $X$. 

\begin{proof}
The map $f_L$ is the composition of the combinatorial embedding $f \co X
\to X \times \R$ with image $[X \times \R]_0$ given by Lemma \ref{isom},
and the height-preserving combinatorial embedding $X \times \R \to X
\times Y$ given by $\id \times (h_Y\vert_L)^{-1}$. 
\end{proof}

\section{Filling disks in level sets}\label{sec:disks}

\begin{definition}
A cube complex $X$ with a height function is \emph{admissible} if every
vertex of $X$ is contained in a monotone line. 
\end{definition}

For any right-angled Artin group $A_{\Gamma}$, the cube complex
$X_{\Gamma}$ is admissible. Pick any vertex $v \in V(\Gamma)$, and note
that every vertex of $X_{\Gamma}$ has a $\{v\}$--flat passing through it,
and such a coordinate flat will be a monotone line for the height
function $h_{\Gamma}$. 

The following result is our main theorem. 

\begin{theorem}\label{mainthm}
Suppose $\alpha \geq 2$ and let $X_1$, $X_2$, and $X_3$ be simply
connected cube complexes with height functions such
that each $X_i$ is admissible and has finite-valued Dehn function $\preceq
n^{\alpha}$. Then $[X_1 \times X_2 \times X_3]_0$ is simply connected
and has Dehn function $\preceq n^{\alpha}$. 
\end{theorem}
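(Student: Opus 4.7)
The plan is to combine the three ingredients highlighted in the introduction: the Embedding Lemma, a Gromov-style template built on a Farey-like triangulation of the filling disk, and a geometric-series bound for the total area. I first set up a family of \emph{coordinate slices} of the level set. For any vertex $v$ of $[X_1 \times X_2 \times X_3]_0$ and any index $i \in \{1,2,3\}$, admissibility of $X_j, X_k$ (with $\{j,k\} = \{1,2,3\} \setminus \{i\}$) supplies monotone lines $L_j \subset X_j$ and $L_k \subset X_k$ through the $j$- and $k$-coordinates of $v$. Iterating the Embedding Lemma produces a subcomplex $[X_i \times L_j \times L_k]_0$ of the level set, containing $v$ and combinatorially isomorphic to $X_i \times \R$; I call it a \emph{type-$i$ slice through $v$}. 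Since $\alpha \geq 2$, such a slice inherits a Dehn function bound $\preceq n^\alpha$ from $X_i$ (crossing with $\R$ adds at worst a quadratic term).

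Given a closed edge path $\gamma$ of length $\leq n$ in $[X_1 \times X_2 \times X_3]_0$, my plan is to build a filling disk whose combinatorial shape is the template from Figure~\ref{fig:farey}: a disk recursively subdivided into triangular regions. The top-level step cuts $\gamma$ into three subarcs and joins the dividing points to a central basepoint by three coordinate arcs, each chosen to lie in an appropriate type-$i$ slice through the basepoint; this yields three triangular regions. Each region is further subdivided by the same procedure, with a new central point and three new coordinate arcs, and the recursion is iterated. The subdivision is arranged so that at depth $k$ the template contains on the order of $3^k$ triangular regions of perimeter on the order of $n/2^k$ each.

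Each triangular region is filled using the scheme of Figure~\ref{fig:triangle}: its three boundary arcs are arranged to lie in a single common type-$i$ slice, and the region is filled inside that slice using its Dehn function bound. A region at depth $k$ thus contributes area at most $C(n/2^k)^\alpha$, and summing over all regions gives
\[
\sum_{k \geq 0} 3^k \cdot C (n/2^k)^\alpha \ = \ C n^\alpha \sum_{k \geq 0} (3/2^\alpha)^k.
\]
Since $\alpha \geq 2$ forces $3/2^\alpha \leq 3/4 < 1$, the series converges and bounds the total filling area by $O(n^\alpha)$. Simple connectivity of $[X_1 \times X_2 \times X_3]_0$ is automatic from this explicit construction.

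The step I expect to be most delicate is arranging that every triangular region really does lie inside a single coordinate slice. An internal coordinate arc of the template separates two triangular regions, and it must simultaneously lie in one region's type-$i$ slice and in the adjacent region's type-$j$ slice for some $i \neq j$; ensuring this compatibility at every vertex, and at every depth of the recursion, is a combinatorial matching problem. I would plan to handle it by choosing at each new central vertex a coherent triple of slices (one of each type) through it, using admissibility to produce the necessary monotone lines, and by arranging the recursive pattern so that slices match across adjacent boundaries. Any residual mismatch is to be absorbed by a short homotopy inside an adjacent slice, whose cost is still controlled by the geometric series above.
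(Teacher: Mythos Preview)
Your plan has the right large-scale shape (Farey-type template plus geometric series), but the step you flag as ``most delicate'' is in fact a genuine gap, and the paper's proof resolves it quite differently from what you sketch.

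You propose to place each template triangle entirely inside a single slice $[X_i \times L_j \times L_k]_0 \cong X_i \times \R$. For this, all three corners of the triangle would need to share the \emph{same} monotone line in factor $j$ and the \emph{same} monotone line in factor $k$. But the corners are arbitrary vertices of $[X]_0$ picked off the boundary loop; there is no mechanism forcing their $j$- and $k$-coordinates onto common monotone lines, and the discrepancy is not ``short'' --- it is of the same order as the perimeter you are trying to control. Your suggested fix (absorb mismatches by short homotopies in adjacent slices) therefore does not close the gap. Note also that your slices carry only \emph{one} full factor $X_i$; the paper's key subcomplexes carry \emph{two}.

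What the paper actually does with Figure~\ref{fig:triangle} is not to fit the triangle into one slice, but to decompose it into \emph{seven} subregions, each lying in a different subcomplex of the level set. The mechanism is a $3$--coloring of the template vertices: the vertex colored $i$ contributes a single monotone line $L_i \subset X_i$ through its $i$-th coordinate (so one line per vertex, not two). With three colors present at the corners, the seven regions land in subcomplexes of the forms $[L_i \times X_j \times X_k]_0 \cong X_j \times X_k$, $[L_i \times L_j \times X_k]_0 \cong L_j \times X_k$, and $[L_1 \times L_2 \times L_3]_0 \cong L_i \times L_j$, all of which have Dehn function $\preceq n^\alpha$. The side of a spanning triangle depends only on its two endpoints (Remark~\ref{commonsides}), which is what makes adjacent triangles share edges; the outermost bigons are handled separately (Lemma~\ref{bigons}).

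A smaller point: the template you describe (pick a central point, join to the three corners, recurse) yields $3^k$ triangles but does \emph{not} halve perimeters, so the pair ``$3^k$ triangles of perimeter $n/2^k$'' is internally inconsistent. The actual Farey template in Figure~\ref{fig:farey} bisects boundary arcs, producing $3\cdot 2^{i-1}$ triangles at depth $i$, each of taut perimeter $\asymp 2^{k-i}$; the relevant series is then $\sum_i 2^{i}\cdot 2^{(k-i)\alpha}$, convergent for $\alpha \geq 2$.
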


\begin{corollary}\label{raagcor}
Suppose $\Gamma = \Gamma_1 \ast \Gamma_2 \ast \Gamma_3$, so that
$A_{\Gamma}$ is the product $A_{\Gamma_1} \times A_{\Gamma_2} \times
  A_{\Gamma_3}$. Then the Bestvina-Brady group $B\! B_{\Gamma}$ has
  quadratic Dehn function. 
\end{corollary}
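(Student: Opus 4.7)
The plan is to derive Corollary \ref{raagcor} directly from Theorem \ref{mainthm} using the standard dictionary between the Dehn function of a group and the Dehn function of a simply connected complex on which it acts properly and cocompactly. Set $X_i := X_{\Gamma_i}$ for $i = 1, 2, 3$. Each $X_i$ is a CAT(0) cube complex and so is simply connected. It is admissible by the observation made immediately after the definition of admissibility in Section \ref{sec:disks}. Being CAT(0), $X_i$ also satisfies a quadratic isoperimetric inequality, so $\delta_{X_i}(n) \preceq n^2$. By the remark at the end of Section \ref{sec:prelim}, $X_\Gamma = X_1 \times X_2 \times X_3$ with $h_\Gamma$ equal to the product height function. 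Applying Theorem \ref{mainthm} with $\alpha = 2$ then yields that $[X_\Gamma]_0$ is simply connected with Dehn function $\preceq n^2$.

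Next I would check that $BB_\Gamma$ acts properly and cocompactly on $[X_\Gamma]_0$. Properness is inherited from the $A_\Gamma$--action on $X_\Gamma$, and $BB_\Gamma = \ker \phi$ preserves the level set because $h_\Gamma$ is $\phi$--equivariant. For cocompactness I would fix a compact $A_\Gamma$--fundamental domain $K \subset X_\Gamma$ with $h_\Gamma(K) \subset [-M, M]$ and, for each integer $n$ with $\abs{n} \leq M$, choose $c_n \in A_\Gamma$ with $\phi(c_n) = n$; a brief computation using $\phi$--equivariance then shows that $\bigl(\bigcup_{\abs{n} \leq M} c_n K\bigr) \cap [X_\Gamma]_0$ is compact and meets every $BB_\Gamma$--orbit in $[X_\Gamma]_0$. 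Since $BB_\Gamma$ acts properly and cocompactly by cellular isometries on a simply connected complex, it is finitely presented and $\delta_{BB_\Gamma}(n) \preceq n^2$.

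For the matching lower bound, I would choose one vertex $v_i \in V(\Gamma_i)$ from each join factor. Since vertices in distinct factors are joined by edges in $\Gamma$, the generators $a_{v_1}, a_{v_2}, a_{v_3}$ pairwise commute and generate a $\Z^3 \leq A_\Gamma$; its intersection with $\ker \phi$ is a $\Z^2$ subgroup of $BB_\Gamma$. Hence $BB_\Gamma$ is not Gromov hyperbolic, and the Gromov--Bowditch--Ol'shanskii gap theorem supplies $\delta_{BB_\Gamma}(n) \succeq n^2$. The argument as a whole is routine once Theorem \ref{mainthm} is in hand; the only step that requires more than a formality is the cocompactness of the level-set action, which rests on the $\phi$--equivariance of $h_\Gamma$ and the surjectivity of $\phi$.
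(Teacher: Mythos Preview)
Your argument is correct and follows essentially the same route as the paper: apply Theorem~\ref{mainthm} to the CAT(0) factors $X_{\Gamma_i}$, then identify $[X_\Gamma]_0$ as a geometric model for $B\!B_\Gamma$. The only cosmetic differences are that the paper dispatches cocompactness in one line (the quotient is a finite complex) while you spell out the translate argument, and for the lower bound the paper points to $2$--dimensional quasi-flats in $[X_\Gamma]_0$ (level sets of triple products of monotone lines) whereas you invoke a $\Z^2$ subgroup and the gap theorem; both are standard and amount to the same observation.
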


In particular, $SB_n$ has quadratic Dehn function for every $n \geq 3$,
since it equals $B\! B_{\Gamma}$ where $\Gamma$ is the join of $n$ copies of
$S^0$. 

\begin{proof}
We have $X_{\Gamma} = X_{\Gamma_1} \times X_{\Gamma_2} \times
X_{\Gamma_3}$ where each $X_{\Gamma_i}$ is CAT(0), and therefore has Dehn
function which is at most quadratic, by \cite[Proposition
III.$\Gamma$.1.6]{BH}. Theorem \ref{mainthm} then says that 
the Dehn function of $[X_{\Gamma}]_0$ is at most quadratic. It is at
least quadratic because it contains $2$--dimensional quasi-flats, namely
the zero-level sets of any product of three monotone lines in the
factors. Finally, note that $[X_{\Gamma}]_0$ is a geometric model for
$B\! B_{\Gamma}$, as follows. It is simply connected, by Theorem
\ref{mainthm}, and is acted on freely by $B\! B_{\Gamma}$, with
quotient a finite cell complex. Hence its Dehn function is the Dehn
function of $B\! B_{\Gamma}$. 
\end{proof}

For the rest of this section, let $X_1$, $X_2$, and $X_3$ be as in the
statement of the theorem. These cube complexes will be left
\emph{unsliced}, for the purpose of estimating distances, though products
will always be given the sliced cell structure. 

Let $X = X_1 \times X_2 \times X_3$ and let $d_0( \, \cdot \, , \, \cdot
\, )$ be the combinatorial metric on the $1$--skeleton of $[X]_0$. This
is the path metric obtained by declaring each edge to be isometric to an
interval of length $1$. Let $d_{X_i}( \, \cdot \, , \, \cdot \, )$ be the
combinatorial metric on the $1$--skeleton of the (unsliced) cube
complex $X_i$.  

Consider for a moment the $1$--skeleton of the unsliced cube
complex $X_1 \times X_2 \times X_3$. Its combinatorial metric is given
by $d(a,b) = d_{X_1}(a_1, b_1) + d_{X_2}(a_2, b_2) + d_{X_3}(a_3, b_3)$
where $a = (a_1, a_2, a_3)$ and $b = (b_1, b_2, b_3)$. Given an 
edge path in $[X]_0$, every edge in the path can be replaced by a path of
length $2$ in $X_1 \times X_2 \times X_3$, which yields the following
inequality: 
\begin{equation}\label{metrics}
d_{X_1}(a_1, b_1) + d_{X_2}(a_2, b_2) + d_{X_3}(a_3, b_3) \ \leq \
2d_0(a,b). 
\end{equation}

\subsection*{Spanning triangles} 
Here we give a construction of a triangular loop in $[X]_0$ and a filling
of that loop by a topological disk in $[X]_0$. The starting data are:
three vertices $a = (a_1, a_2, a_3)$, $b = (b_1, b_2, b_3)$, and $c =
(c_1, c_2, c_3)$ in $[X]_0$ which will be the corners of the triangle,
and three monotone lines $L_i \subset X_i$ ($i = 1, 2, 3$) such that $a_1
\in L_1$, $b_2 \in L_2$, and $c_3 \in L_3$. 

Figure \ref{fig:triangle} shows the triangular loop and some vertices and
paths which will form part of the $1$--skeleton of the filling disk. In
addition to the original three corner vertices, there are six ``side
vertices'' and three interior vertices. Their coordinates in $X$ are as
indicated in the figure. The side vertices have the property that two of
their coordinates are points lying in the monotone lines. The interior
vertices have all three coordinates lying in the monotone lines. 

\begin{figure}[ht]
\labellist
\hair 3pt
\small
\pinlabel {$(\textcolor{blue}{a_1},a_2,a_3)$} [l] at 130 234
\pinlabel {$(b_1, \textcolor{blue}{b_2}, b_3)$} [r] at 0 18
\pinlabel {$(c_1, c_2, \textcolor{blue}{c_3})$} [l] at 254 18

\pinlabel {$(\, \textcolor{blue}{\circ} \, , a_2,
  \textcolor{blue}{c_3})$} [l] at 169 167
\pinlabel {$(\textcolor{blue}{a_1} , c_2, \, \textcolor{blue}{\circ} \,
  )$} [l] at 215 86 

\pinlabel {$(\, \textcolor{blue}{\circ} \, , \textcolor{blue}{b_2},
  a_3)$} [r] at 85 167
\pinlabel {$(\textcolor{blue}{a_1} , \, \textcolor{blue}{\circ} \, , b_3)$}
[r] at 39 86 

\pinlabel {$(b_1, \, \textcolor{blue}{\circ} \, , \textcolor{blue}{c_3})$}
[t] at 80 16
\pinlabel {$(c_1, \textcolor{blue}{b_2}, \, \textcolor{blue}{\circ} \,
  )$} [t] at 174 16

\pinlabel {$( \, \textcolor{blue}{\circ} \, , \textcolor{blue}{b_2},
  \textcolor{blue}{c_3})$} [l] at 194 145 
\pinlabel {$(\textcolor{blue}{a_1}, \textcolor{blue}{b_2},
   \, \textcolor{blue}{\circ} \, )$} [l] at 240 60
\pinlabel {$(\textcolor{blue}{a_1}, \, \textcolor{blue}{\circ} \, , 
  \textcolor{blue}{c_3})$} [r] at 15 60 

\tiny
\pinlabel* {$[L_1 \times X_2 \times X_3]_0$} at 128 172
\pinlabel* {\rotatebox[origin=c]{60}{$[L_1 \times L_2 \times X_3]_0$}} at
85 116 
\pinlabel* {\rotatebox[origin=c]{300}{$[L_1 \times X_2 \times L_3]_0$}}
at 170.5 116 
\pinlabel* {$[L_1 \times L_2 \times L_3]_0$} at 128 80
\pinlabel* {$[X_1 \times L_2 \times X_3]_0$} at 48 38
\pinlabel* {$[X_1 \times L_2 \times L_3]_0$} at 128 38
\pinlabel* {$[X_1 \times X_2 \times L_3]_0$} at 207 38
\endlabellist
\includegraphics[width=3in]{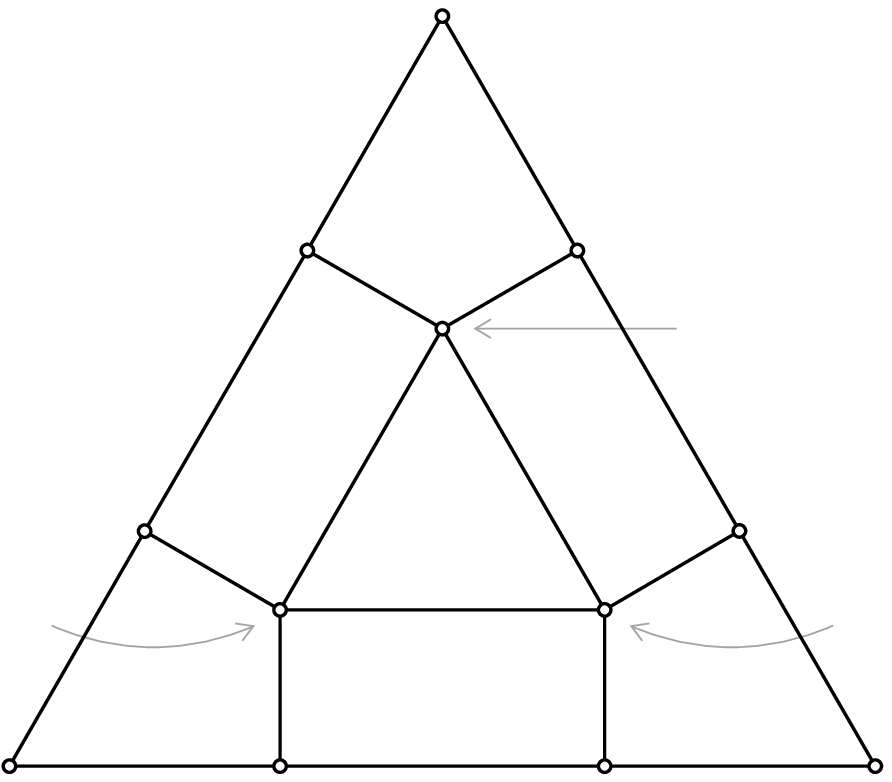}
\caption{A spanning triangle for the vertices $a, b, c \in [X]_0$. A
  blue entry indicates a point which is known to lie in the monotone line
  in that factor. A blue circle  represents the unique point in the
  monotone line for which the triple has height $0$. The
  labels in the regions indicate that each path in the boundary of that
  region lies within the indicated subspace.} 
\label{fig:triangle} 
\end{figure}

There are four types of paths in this figure: six paths joining corner
vertices to side vertices, three paths between adjacent side vertices,
six paths from side vertices to interior vertices, and three interior
paths. Consider first the path between $(a_1, a_2, a_3)$ and $(\, \circ \, ,
b_2, a_3)$ in the upper left part of the figure. We may write the second
vertex as $(a'_1, b_2, a_3)$ where $a'_1$ is the unique point
on $L_1$ such that the triple has height $0$. There is an edge path in
$X_2$ from $a_2$ to $b_2$ of length $d_{X_2}(a_2, b_2)$. Combine this
with the constant path $a_3$ in $X_3$, and the path in $L_1$ from $a_1$
to $a'_1$ which compensates for the height changes in $X_2$, keeping the
path in $[X]_0$. Put another way, this path in $[X]_0$ is the image of the
path in $X_2 \times X_3$ under the identification with $[L_1 \times X_2
\times X_3]_0$ given by the Embedding Lemma. 

Next consider the path from $(a'_1, b_2, a_3)$ to the interior vertex
$(\, \circ \, , b_2, c_3) = (a''_1, b_2, c_3)$. This is defined by
combining the constant path $b_2$ in $L_2$ with an edge path in $X_3$ from
$a_3$ to $c_3$ of length $d_{X_3}(a_3, c_3)$, and interpreting as a path in
$[L_1 \times L_2 \times X_3]_0$ via the isomorphism with $L_2 \times
X_3$. 

To define the path in $[L_1 \times L_2 \times X_3]_0$ from $(a'_1, b_2,
a_3)$ to $(a_1, \, \circ \, , b_3) = (a_1, b'_2, b_3)$ we proceed in a
somewhat non-canonical manner. First use a path in $X_3$ from $a_3$
to $b_3$, of length $d_{X_3}(a_3, b_3)$, combined with the constant path
$b_2$ in $L_2$, and a compensating motion in $L_1$, ending at a point
$(a'''_1, b_2, b_3)$. Then $(a_1, b'_2)$ and $(a'''_1, b_2)$ have the same
height in $L_1 \times L_2$, so we can move horizontally from one to the
other, in $d_{L_2}(b'_2, b_2)$ steps. This path combines with the
constant path $b_3$ in $X_3$ to give a path in $[X]_0$. Note that
$d_{L_2}(b'_2,b_2) \leq d_{X_1}(a_1, b_1)$ since the path from $(a_1,
b'_2, b_3)$ to $(b_1, b_2, b_3)$ (of length $d_{X_1}(a_1, b_1)$) projects
to a path in $X_2$ from $b'_2$ to $b_2$. 

Lastly consider a path in $[L_1 \times L_2 \times L_3]_0$ between the
interior vertices $(a''_1, b_2, c_3)$ and $(a_1, \, \circ \, , c_3) =
(a_1, b''_2, c_3)$. Use a path in $L_1 \times L_2$ from $(a''_1, b_2)$
to $(a_1, b''_2)$ of length at most $d_{X_1}(a_1, a''_1) + d_{X_2}(b_2,
b''_2)$, and embed as a path in $[L_1 \times L_2 \times L_3]_0$. We have
seen already that $d_{X_1}(a_1, a''_1) \leq d_{X_2}(a_2, b_2) +
d_{X_3}(a_3, c_3)$ and $d_{X_2}(b_2, b''_2) \leq d_{X_1}(a_1, b_1) +
d_{X_3}(b_3, c_3)$. 

The rest of the paths are defined analogously according to their
types. The length information just discussed is collected in Figure 
\ref{fig:lengths}. We use the shorthand $\abs{a_i - b_i} = d_{X_i}(a_i,
b_i)$. 

\begin{figure}[ht]
\labellist
\hair 3pt
\small
\pinlabel {$\abs{a_2 - b_2}$} [r] at 87 207
\pinlabel {$\abs{a_3 - c_3}$} [l] at 168 207
\pinlabel {$\abs{a_1 - b_1}$} [r] at 6 68
\pinlabel {$\abs{a_1 - c_1}$} [l] at 249 68
\pinlabel {$\abs{b_3 - c_3}$} [r] at 10 5
\pinlabel {$\abs{b_2 - c_2}$} [l] at 245 5

\Small
\pinlabel {$\abs{a_1 - b_1} + \abs{b_3 - c_3}$} [br] at 65 160
\pinlabel {$+ \, \abs{a_2 - b_2} +  \abs{a_3 - c_3}$} [br] at 65 144

\pinlabel {${\abs{a_3 - b_3} + \abs{a_1 - b_1}}$} [r] at 54 118

\pinlabel {$\abs{a_3 - c_3} + \abs{a_2 - b_2} \, +$} [bl] at 191 160
\pinlabel {$\abs{a_1 - c_1} + \abs{b_2 - c_2}$} [bl] at 191 144

\pinlabel {${\abs{a_2 - c_2} + \abs{a_3 - c_3}}$} [l] at 200 118

\tiny
\pinlabel {${\abs{b_2 - c_2} + \abs{a_1 - c_1}}$} [b] at 129 48
\pinlabel {${+ \, \abs{b_3 - c_3} + \abs{a_1 - b_1}}$} [t] at 126 51

\Small
\pinlabel {${\abs{b_1 - c_1} + \abs{b_2 - c_2}}$} [t] at 128 15
\endlabellist
\includegraphics[width=3in]{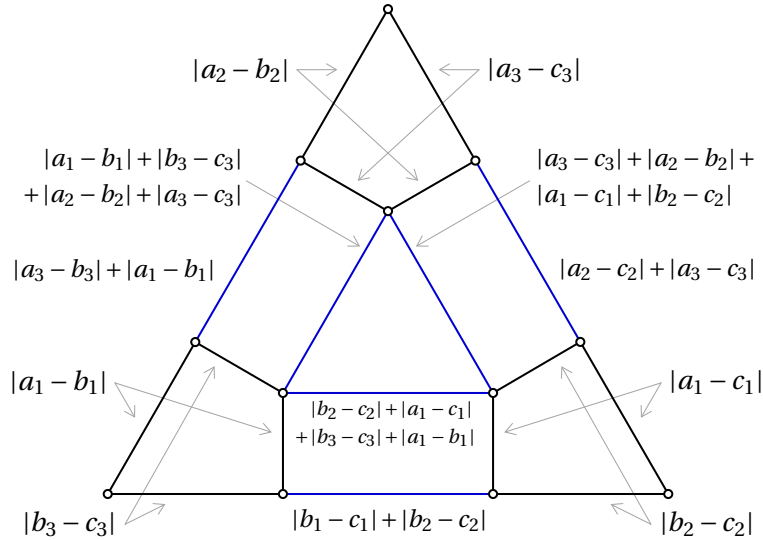}
\caption{Lengths in the spanning triangle. For the blue paths,
  the lengths given are upper bounds only.}\label{fig:lengths} 
\end{figure}

\begin{definition}
The quantity $d_0(a,b) + d_0(b,c) + d_0(c,a)$ will be called the
\emph{taut perimeter} of the spanning triangle (not to be confused with
the actual perimeter). 
\end{definition}

\begin{remark}\label{triangleremarks}
From Figure \ref{fig:lengths} and \eqref{metrics} we can conclude: 
\begin{enumerate}
\item The side from $a$ to $b$ has length at most\/ $4
  d_0(a,b)$. Similarly, the other two sides have lengths at most\/
  $4d_0(b,c)$ and\/ $4d_0(c,a)$. 
\item Each of the seven regions has perimeter at most $4 P$, where $P$ is
  the taut perimeter of the spanning triangle. 
\end{enumerate}
\end{remark}

\begin{remark}\label{fillingremark}
Each of the subcomplexes labelling a region in Figure \ref{fig:triangle}
is combinatorially isomorphic to one of the (sliced) cell complexes $X_i
\times X_j$, $L_i \times X_j$, or $L_i \times L_j$, by the Embedding
Lemma. All of these have Dehn functions that are $\preceq
n^{\alpha}$ (here we use the assumption that $\alpha \geq 2$). Let $C$ be
chosen so that $f(n) = Cn^{\alpha}$ is an upper bound for these Dehn
functions. Then, by Remark 
\ref{triangleremarks}(2), the triangle has filling area at most $7 C
(4P)^{\alpha}$ in $[X]_0$, where $P$ is the taut perimeter. 
\end{remark}

\begin{remark}\label{commonsides}
The definition of the path along any side of the triangle depends only on
its endpoints, and a choice of direction along the side (for the
non-canonical path in the middle segment). Given two triples of vertices
$a, b, c$ and $a, b, c'$, spanning triangles for both can be made to
agree along their sides from $a$ to $b$, by choosing the same
direction on those sides. 

A path along a side of a spanning triangle will be called a
\emph{spanning path}. 
\end{remark}

\subsection*{Short spanning paths}
Consider the spanning path from $a$ to $b$ in Figure
\ref{fig:triangle}, and suppose that $a$ and $b$ have distance at most
$1$ in $[X]_0$. If $a=b$ then the spanning path is a constant
path, of length $0$. If $a\not= b$, the geodesic from $a$ to $b$ is a
single edge in $[X]_0$, and we need to examine how this path may differ
from the spanning path. 

\begin{lemma}\label{bigons}
If $a$ and $b$ have distance $1$ in $[X]_0$ then the spanning path
from $a$ to $b$ and the geodesic edge from $a$ to $b$ together form a
loop with filling area at most $4$ in $[X]_0$. 
\end{lemma}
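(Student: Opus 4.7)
My plan is to case-analyze on the type of $2$-cube of $X_1\times X_2\times X_3$ whose diagonal is the geodesic edge from $a$ to $b$. Such a $2$-cube is either contained in a single factor $X_i$ (times vertices of the other two), in which case $a$ and $b$ differ only in the $i$-th coordinate, or else it is a product of one edge in $X_i$ and one edge in $X_j$ with $i\neq j$ (times a vertex of the third factor), in which case $a$ and $b$ differ in precisely the $i$-th and $j$-th coordinates. In every case each horizontal $2$-cell of $[X]_0$ appearing in the filling will be constructed as the $h=0$ slice of a $3$-cube of $X_1\times X_2\times X_3$ whose eight vertex heights range over an integer interval of length $3$ containing $0$.

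First I would handle the three single-factor cases: using that the auxiliary vertices $a'_1$, $a'''_1$, $b'_2$ are each determined by the height-$0$ constraint, one checks that the spanning path reduces to two edges, so together with the geodesic edge they form a $3$-loop bounding a single horizontal triangle obtained as the $h=0$ slice of a $3$-cube of the form (edge in $L_k$) $\times$ (the factor $2$-cube) $\times$ (vertex); this gives filling area $1$. In the two-factor case with differing coordinates $\{2,3\}$ the spanning path again reduces to two edges and one $3$-cube suffices. In the two-factor case with differing coordinates $\{1,3\}$ the spanning path has three edges, and I would split the resulting $4$-loop into two triangles along the new diagonal from $a$ to $(a_1,b'_2,b_3)$, each realized as the $h=0$ slice of a suitable $3$-cube built from edges in $L_1$, $L_2$, $X_1$, and $X_3$.

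The critical case is the two-factor case with differing coordinates $\{1,2\}$. Here the spanning path has three edges and the entire $4$-loop lies in the $2$-dimensional slice $X_1\times X_2\times\{a_3\}$; the four $2$-cubes of $X_1\times X_2$ whose diagonals form the loop all meet at $(a_1,b_2,a_3)$, but this vertex has height $1$ and so lies outside $[X]_0$. I would then invoke admissibility of $X_3$ to obtain a vertex $a_3^-$ adjacent to $a_3$ with $h_3(a_3^-)=h_3(a_3)-1$, and take $q=(a_1,b_2,a_3^-)\in[X]_0$ as the apex of four horizontal triangles, each obtained as the $h=0$ slice of the $3$-cube formed by multiplying one of the four ``pinwheel'' $2$-cubes by the edge $(a_3,a_3^-)$. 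This yields the extremal filling area of $4$. The main obstacle is precisely this escape from the slice: the natural $2$-cubes bounding the loop are not in $[X]_0$ at all, and admissibility of the third factor is needed to locate the apex $q$; once $q$ has been chosen, the verification that each $3$-cube has the right height range and that its $h=0$ slice is the prescribed triangle reduces to a direct computation of eight vertex heights in each case.
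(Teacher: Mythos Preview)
Your proposal is correct and follows essentially the same approach as the paper: a case analysis on which coordinates differ, with explicit fillings built from the $h=0$ slices of carefully chosen $3$--cubes, culminating in the critical $\{1,2\}$ case where admissibility of $X_3$ is invoked to locate the apex vertex $(a_1,b_2,a_3')$ for a fan of four triangles. Your framing of every filling as the $h=0$ slice of a $3$--cube is a clean unifying device.

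One point worth noting: in the single--factor case you obtain filling area $1$, whereas the paper asserts filling area $0$ (claiming the non-constant segment has length $1$ and coincides with the geodesic). Your count is in fact the correct one in general: when, say, only the second coordinates differ, $a_2$ and $b_2$ are diagonally opposite in a $2$--cube of $X_2$, so $d_{X_2}(a_2,b_2)=2$ in the unsliced metric and the spanning segment has two edges in $[X]_0$, not one. The paper's ``length $1$'' statement is accurate only when the $X_i$ are $1$--dimensional (so that the single--factor case never arises), as in the Stallings--Bieri application. Your treatment covers the general case cleanly, and of course the bound of $4$ is unaffected. One small caveat: in the $\{1,2\}$ case you fix $h_3(a_3^-)=h_3(a_3)-1$, but the required sign depends on whether $(a_1,b_2,a_3)$ sits at height $+1$ or $-1$; you should say ``adjacent to $a_3$ with $(a_1,b_2,a_3^-)\in[X]_0$'' as the paper does.
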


\begin{proof}
Since their distance is $1$, the points $a$ and $b$ differ in either
one or two coordinates. If they differ in only one coordinate, then
one finds that two of the three segments making up the spanning path
are constant paths. (There are three cases, according to the
coordinate where $a_i \not= b_i$.) The remaining segment has length
$1$, as shown by Figure \ref{fig:lengths}. Thus, up to
reparametrization, the spanning path agrees with the geodesic path 
and the filling area is $0$. 

Now suppose that $a$ and $b$ differ in two coordinates. Recall that
the spanning path joins the following points, in order: $(a_1, a_2,
a_3)$, $(a_1', b_2, a_3)$, $(a_1''', b_2, b_3)$, $(a_1, b_2', b_3)$,
and $(b_1, b_2, b_3)$. There are now three cases. If $a_1 = b_1$, then
one also finds that $a_1''' = a_1 = b_1$ and $b_2' = b_2$. The image
of the spanning path consists of two edges in the boundary of the cube
$[a_1', a_1] \times [a_2, b_2] \times [b_3, a_3] \subset X$, from
$(a_1, a_2, a_3)$ to $(a_1', b_2, a_3)$ to $(a_1, b_2, b_3)$. Together
with the geodesic edge, these edges are the boundary of a horizontal
$2$--cell in $[X]_0$, and the filling area is $1$. 

If $a_2 = b_2$ then one also has $a_1' = a_1$. Let $[a_1''', b_1]$
denote the length two path in $X_1$ from $a_1'''$ to $b_1$, with
midpoint $a_1$. Then $[a_1''', b_1] \times [b_2', a_2] \times [a_3,
b_3]$ is a union of two cubes in $X$. The image of the spanning
path consists of three edges on the boundary of these cubes. Together
with the geodesic edge, they form a quadrilateral that bounds two
triangles in $[X]_0$. 

If $a_3 = b_3$ then one also has $a_1' = a_1'''$. Let $[a_1', b_1]
\subset X_1$ and $[a_2, b_2'] \subset X_2$ denote the paths of length
$2$, with midpoints $a_1$ and $b_2$ respectively. The image of the
spanning path consists of three edges in $[a_1', b_1] \times [a_2,
b_2'] \times \{a_3\}$, and the geodesic edge also lies in this
subset. Choose a vertex $a_3' \in X_3$ such that 
$d_{X_3}(a_3, a_3') = 1$ and $(a_1, b_2, a_3') \in [X]_0$. Such a
vertex exists because $X_3$ is admissible. Then $[a_1', b_1] \times
[a_2, b_2'] \times [a_3, a_3'] \subset X$ is a union of four cubes 
in which the spanning path and geodesic path bound a disk made of four
horizontal triangles, with common vertex $(a_1, b_2, a_3')$. 
\end{proof}

\begin{proof}[Proof of Theorem \textup{\ref{mainthm}}]
Simple connectedness of $[X]_0$ will follow from 
the remainder of the proof, in which we construct disks in $[X]_0$
filling any given loop. 

Let $p$ be a closed edge path in $[X]_0$ of length $n > 3$. There is a
number $k \in \N$ such that $3 \cdot 2^{k-1} < n \leq 3 \cdot 2^{k}$. Let
$\hat{p}$ be a path of length $\hat{n} = 3 \cdot 2^{k}$ obtained by 
padding $p$ with steps that move distance $0$. Note that $p$ and
$\hat{p}$ have the same filling area. 

Now consider the triangulated disk $D$ shown in Figure
\ref{fig:farey}. It has $\hat{n}$ vertices along its boundary, $\hat{n}$
bigons around the outside, and $3 \cdot 2^{k} - 2$ triangles. Each
triangle has a \emph{depth}, where the central triangle has depth $0$,
its neighbors have depth $1$, and so on. For $i = 1, \dotsc, k$ there
are $3 \cdot 2^{i-1}$ triangles of depth $i$, and $k$ is the maximum
depth that occurs. 

\begin{figure}[ht]
\labellist
\hair 3pt
\Small
\pinlabel {$1$} [b] at 72.5 154
\pinlabel {$2$} [tr] at 10 47
\pinlabel {$3$} [tl] at 136 47

\pinlabel {$1$} [t] at 73 8
\pinlabel {$2$} [bl] at 136 114
\pinlabel {$3$} [br] at 10 114

\pinlabel {$3$} [bl] at 108 143
\pinlabel {$1$} [l] at 144.5 81
\pinlabel {$2$} [tl] at 108 18
\pinlabel {$3$} [tr] at 38.5 18
\pinlabel {$1$} [r] at 1 81
\pinlabel {$2$} [br] at 38.5 143.5

\endlabellist
\includegraphics{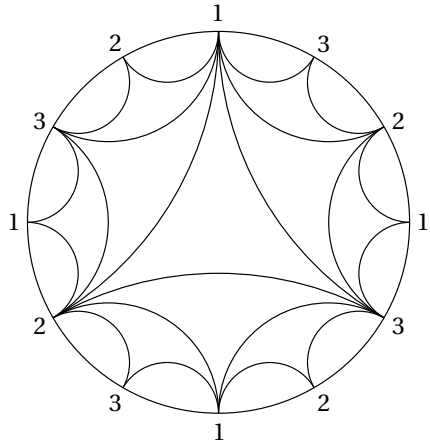}
\caption{Filling the loop with spanning triangles. A $3$--coloring of the
vertices is shown.}\label{fig:farey} 
\end{figure}

There is a $3$--coloring of the vertices of $D$: each vertex $v$ can be 
assigned a coordinate $\kappa(v) \in \{1, 2, 3\}$ such that $\kappa(v)
\not= \kappa(u)$ whenever $u,v$ bound an edge in $D$. Now identify the
boundary of $D$ with the path $\hat{p}$. Each vertex of $D$ is identified
with a vertex in $[X]_0$ and has coordinates in $X$. Writing $v = (v_1,
v_2, v_3)$, choose a monotone line $L_v$ in $X_{\kappa(v)}$ which
contains the point $v_{\kappa(v)}$. 

Each triangle in $D$ can now be filled with a spanning triangle for its
vertices, using the three vertices and the three monotone lines 
chosen for those vertices. The $3$--coloring ensures that this data
conforms to the requirements of the starting data for spanning
triangles. We start by filling the central triangle, and then proceed
to fill triangles in order of depth. Each new triangle to be filled
meets the previously filled triangles in a single edge, so 
by Remark \ref{commonsides}, the spanning triangle can be chosen to
use the same spanning path for that edge. Then the spanning
triangles fit together to yield a filling of $D$, minus the bigons. 

Declare the \emph{depth} of an interior edge in $D$ to be the minimum of
the depths of its neighboring triangles. Note that an edge of depth $i$
joins points on the boundary that bound a boundary arc of length
$2^{k-i}$, and so these points have distance at most $2^{k-i}$ in
$[X]_0$. 

The central triangle has taut perimeter at most $\hat{n}$, and the taut
perimeter of a depth $i$ spanning triangle ($i = 1, \dotsc, k$) is at
most $2^{k - (i-1)} + 2^{k-i} + 2^{k-i} =  2^{k-i+2}$. By Remark
\ref{fillingremark} the central spanning triangle has area at most $7C
(4\hat{n})^{\alpha} = 7 \cdot 12^{\alpha} C \cdot 2^{k\alpha}$ and a depth
$i$ spanning triangle has area at most $7 \cdot 4^{\alpha} C
(2^{k-i+2})^{\alpha}$. Now the total area of the spanning triangles is at
most 
\begin{align*}
7 \cdot 12^{\alpha} C \cdot 2^{k\alpha} + \sum_{i=1}^k 3 \cdot 2^{i-1} \cdot
7 \cdot 4^{\alpha} C (2^{k - i + 2})^{\alpha}
\ &= \ 7 \cdot 12^{\alpha} C \cdot 2^{k\alpha} + 21C \cdot
2^{k \alpha + 4 \alpha - 1} 
\sum_{i=1}^k 2^{(1-\alpha) i} \\ 
&< \ 7 \cdot 12^{\alpha} C \cdot 2^{k\alpha} + 21C \cdot 2^{k \alpha
  + 4 \alpha - 1} \\ 
&< \ 28C \cdot 12^{4\alpha} \cdot 2^{k\alpha}. 
\end{align*}
Each bigon has filling area at most $4$, by Lemma \ref{bigons}. Then
the filling area of $p$ is at most 
\begin{align*}
28C \cdot 12^{4\alpha} \cdot 2^{k\alpha} + 4 \cdot 3 \cdot 2^k
\ &< \ (28C + 1) \cdot 12^{4\alpha} \cdot 2^{k\alpha} \\
&= \ (28C+1) \cdot 12^{4\alpha} \cdot 2^{\alpha} \cdot 3^{-\alpha} \cdot (3
\cdot 2^{k-1})^{\alpha} \\
&< \ ((28C+1) \cdot 12^{4\alpha} \cdot 2^{\alpha} \cdot 3^{-\alpha})
n^{\alpha} 
\end{align*}
since $3 \cdot 2^{k-1} < n$. Therefore $\delta_{[X]_0}(n) \leq
Kn^{\alpha}$ where $K = (28C+1) \cdot 12^{4\alpha} \cdot 2^{\alpha} \cdot
3^{-\alpha}$. 
\end{proof}


\def\cprime{$'$}

\end{document}